\definecolor{halfgray}{gray}{0.55} 
\definecolor{webgreen}{rgb}{0,0.5,0}
\definecolor{webbrown}{rgb}{.6,0,0}
\newtheorem{theorem}{Theorem}[section]
\newtheorem{lemma}[theorem]{Lemma}
\newtheorem{corollary}[theorem]{Corollary}
\newtheorem{proposition}[theorem]{Proposition}
\newtheorem{conjecture}{Conjecture}
\newtheorem{definition}[theorem]{Definition}
\theoremstyle{definition}
\newtheorem{remark}[theorem]{Remark}
\newcommand{\field}[1]{\mathbb{#1}}
\newcommand{\R}{\field{R}}
\newcommand{\Z}{\field{Z}}
\def\arXiv#1{\@ifundefined{href}
 {{\mdseries\ttfamily arxiv:#1}}
  {\href{http://arxiv.org/pdf/#1}
   {{\mdseries available on ArXiv}}}}
\begin{document}
\baselineskip=16pt

\title{Linearization of Cohomology-free Vector Fields}

\author{Livio Flaminio}

\author{Miguel Paternain}

\address{UFR de Math\'ematiques\\
  Universit\'e de Lille 1 (USTL)\\
  F59655 Villeneuve d'Asq CEDEX\\
  FRANCE}

\address{Centro de Matem\'atica\\
  Facultad de Ciencias\\
  Igu\'a 4225\\
  11400 Montevideo\\
  URUGUAY}

\email {livio.flaminio@math.univ-lille1.fr}

\email {miguel@cmat.edu.uy}

\keywords {Cohomological Equations, Greenfield-Wallach and Katok conjectures.} 
\subjclass[2000]{Primary: 37-XX, 37C15, 37C40}

\thanks{Miguel Paternain was partially supported by an
  ANII-grant. Both authors were partially supported by the PREMER
franco-uruguayan grant}
\begin{abstract}
We study the cohomological equation for a smooth vector field on a compact manifold.
We show that if the vector field is cohomology free, then it can be embedded continuously in
a linear flow on an Abelian group.
\end{abstract}
\maketitle
\section{Introduction}
\label{sec:intro}

We consider a smooth flow~$(\phi^t)_{t\in \R}$   generated
by a vector field~$X$ on a smooth connected
compact manifold~$M$.

A major problem arising in many different contexts of the theory
of dynamical systems is to solve the \emph{cohomological equation}  for the
flow~$(\phi^t)_{t\in \R}$, the equation given~by
\begin{equation}
  \label{eq:cohom0}
  {L}_X h = f.
\end{equation}
Here $L_X$ denotes the Lie derivative in the direction of the
vector field~$X$, $f$ is a given function and $h$~is~the solution we seek.

To
make sense of this problem it is of course necessary to impose some
regularity conditions on the data~$f$ as well as on the
solution~$h$. In low regularity we shall interpret the
equation~\eqref{eq:cohom0} in a weak sense.

\begin{sloppypar}
  We endow the space $C^\infty(M)$ with the $C^\infty$-topology and
define the \emph{$C^\infty$-cohomology of the
  flow~$(\phi^t)_{t\in \R}$} as the quotient vector space
$C^\infty(M)/ L_X(C^\infty(M)) $; the \emph{reduced
  $C^\infty$-cohomology} is instead the topological vector space
$C^\infty(M)/ \overline {L_X(C^\infty(M))} $, where the closure
is taken in the $C^\infty$-topology.
\end{sloppypar}

Restricting Katok's definition, (\cite{MR1858535}, \cite{MR2008435}),
to the $C^\infty$ setting, we say that the flow~$(\phi^t)_{t\in \R}$
 is \emph{$C^\infty$-stable} if its cohomology and reduced
cohomology coincide, that is if the image of the Lie derivative
operator $L_X$ is a closed subspace of $C^\infty(M)$. By
Hahn-Banach's Theorem this is equivalent to saying that, for every
function~$f$ belonging to the kernel of all $X$-invariant Schwartz
distributions, the equation~\eqref{eq:cohom0} admits a solution~$h
\in C^\infty$.

We remark that since a continuous flow on a compact
manifold admits always an invariant measure, the reduced
cohomology of a flow is at least one-dimensional.

Stability in the $C^\infty$ setting has been established for a
variety of flows; we mention just a few: \cite{MR2003124},
\cite{MR2261072}, \cite{MR2261071}, in the homogeneous setting,
and \cite{MR579578}, \cite{MR573432}, \cite{MR766961} for systems
of dynamical origin. Furthermore stability for action of higher
rank Abelian group has also been established in many exemplary
cases (cf.\ for example \cite{MR2119035}, \cite{MR2166714},
\cite{MR2342703}, \cite{MR2324486}, \cite{MR2261072}).

In all the cases studied so far, the flow cohomology is infinite
dimensional, with one notable exception. In fact, linear flows on
$d$-dimensional tori provide the classical and only known examples
with one-dimensional reduced flow cohomology. It is well known that
 such a flow is $C^\infty$-stable if and only if the direction
numbers $\alpha \in \R^d$ of the vector field~$X$ is a
\emph{Diophantine vector}, that is, if there exist positive
constants $C$ and $\tau$ such that
\begin{equation}
  \label{eq:dioph}
  | n \cdot \alpha | \ge C \| n\|^{-\tau}, \quad \forall \, n \in \Z^d \setminus\{0\}.
\end{equation}
When $\alpha$ is not Diophantine then there exists $f\in C^\infty(\mathbb T^d)$, with $\int f \, \hbox{d}\mu =0$,
for which equation~\eqref{eq:cohom0} admits no solution in
$L^2(\mathbb T^d,\mu)$ (\cite{MR1858535}, \cite{MR2008435}) of
even measurable (\cite{MR2104585}); thus fast approximation
by periodic flows provides one of mechanisms through which
$C^\infty$-stability fails to hold.

A $C^\infty$-stable flow is called \emph{rigid} or
\emph{cohomology free} if its flow cohomology is one dimensional;
A.~Katok (\cite{MR805843}, \cite{MR1858535}, \cite{MR2008435})
suggested the following:

\begin{conjecture}
  \label{problem}
  Let $\{\phi^t\}_{t\in \R}$ be a cohomology-free flow generated
  by a vector field~$X$ on a compact connected
  manifold~$M$. Then, up to diffeomorphism, the manifold $M$~is a
  torus and $X$~is a Diophantine vector field.
\end{conjecture}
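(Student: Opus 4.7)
The plan is to follow the standard program toward Katok's conjecture: exploit the cohomology-free hypothesis to build a large group of symmetries of~$X$, then identify $M$ with a compact abelian Lie group on which $X$ acts as a Diophantine translation.

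First, I would analyze the invariant distributions. The hypothesis that $L_X(C^\infty(M))$ is closed of codimension one shows that the space of $X$-invariant distributions is one dimensional; the existence of an invariant probability measure identifies the (unique, up to scale) generator with a Radon measure~$\mu$. A regularity argument---either by pairing $\mu$ with coboundaries and using the open mapping theorem to bound $\mu$ in negative Sobolev norms, or by a bootstrap using local smooth solvability of~$L_X h = f$ in the direction transverse to~$X$---should upgrade $\mu$ to a smooth volume form. Next I would classify the eigendistributions $D_\alpha = \{u\in\mathcal D'(M)\colon L_X u = 2\pi i \alpha u\}$ for $\alpha\in\R$. The cohomology-free hypothesis, applied after twisting by a candidate eigenfunction, should yield that each $D_\alpha$ is finite-dimensional and nonzero only for $\alpha$ in a countable set~$\Sigma$; since the product of eigendistributions is an eigendistribution (once smoothness is established), $\Sigma$ is a subgroup of~$\R$, and each nontrivial eigenfunction $u_\alpha$ is a smooth function of constant modulus by uniqueness of~$\mu$ and ergodicity.

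The family $\{u_\alpha\}_{\alpha\in\Sigma}$ then furnishes a continuous map $\Phi\colon M\to \widehat\Sigma$ into the Pontryagin dual of~$\Sigma$, equivariant with respect to $X$ and the translation flow on~$\widehat\Sigma$ by the element dual to the inclusion $\Sigma\hookrightarrow\R$. Injectivity of~$\Phi$ will follow if the eigenfunctions separate points; this is where one uses the closedness of the range of $L_X$ in an essential way, since separation amounts to showing that no nontrivial smooth function is annihilated by every $X$-invariant distribution on~$M\times M$. This yields the continuous embedding into a linear flow on an abelian group, which is the content advertised in the abstract.

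The real obstacle is the last step: to conclude Katok's conjecture one must show that $\Sigma$ is finitely generated, so that $\widehat\Sigma$ is a finite-dimensional torus, and that $\Phi$ is actually a diffeomorphism onto $\widehat\Sigma$. Here the topological embedding gives no a priori control on the algebraic rank of~$\Sigma$, and a purely functional-analytic input seems insufficient. The natural route is to construct, for each $\alpha\in\Sigma$, a smooth vector field $Y_\alpha$ commuting with~$X$ and having $u_\alpha$-derivative equal to~$u_\alpha$, then use the finite dimensionality of~$M$ to bound the rank of the abelian Lie algebra so produced; making $\Phi$ a local diffeomorphism would then promote the continuous embedding to a smooth covering of a torus. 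Once $M$ is identified with $\mathbb T^d$ and $X$ with a constant vector field of frequency $\alpha\in\R^d$, the Diophantine condition~\eqref{eq:dioph} is forced by $C^\infty$-stability via the classical Fourier series computation already recalled in the introduction.
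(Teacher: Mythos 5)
The statement you are proving is Conjecture~\ref{problem}, which the paper does \emph{not} prove---it is an open conjecture (of Katok, equivalent to the Greenfield--Wallach conjecture), known only for $\dim M\le 3$. The paper's actual contribution is the much weaker Theorem~\ref{thm:linear}: a \emph{continuous} injection of $M$ into a possibly non-separated topological abelian group $A$, built from de Rham currents and the solvability of $L_Xh_\eta=\eta(X)-c_\eta$ for one-forms $\eta$, not from eigenfunctions. So no proof of yours could be ``the same as the paper's''; the question is whether your program closes the gaps that have kept the conjecture open, and it does not.

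Two concrete problems. First, your claim that the eigenfunctions separate points is unsupported: you reduce it to the statement that no nontrivial smooth function on $M\times M$ is annihilated by all $X$-invariant distributions there, but the cohomology-free hypothesis is a statement about $M$, not about the product flow on $M\times M$. The diagonal action on $M\times M$ is never uniquely ergodic (every set $\{(y,\phi^s(y))\}$ carries an invariant measure), so invariant distributions on the product are plentiful and the hypothesis gives you no control over them. Separation of points by eigenfunctions is equivalent to the flow being topologically conjugate to a group translation, which is essentially the content of the conjecture itself; what is actually known (F.~and~J.~Rodriguez-Hertz, cited in the introduction) is only a semi-conjugacy onto a torus of dimension $b_1(M)$, and nothing forces the point spectrum to separate points of $M$. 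Second, you yourself flag the final step---showing $\Sigma$ is finitely generated and upgrading $\Phi$ to a diffeomorphism by constructing commuting vector fields $Y_\alpha$---as ``the real obstacle,'' and indeed no construction of such $Y_\alpha$ from the cohomology-free hypothesis is known; this is precisely where all attempts on the conjecture stall. Your intermediate output (a continuous equivariant map to a compact abelian group) is in the spirit of Theorem~\ref{thm:linear}, but reached by a different and, as it stands, incomplete route, and the passage from there to the conjecture remains entirely open.
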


Chen and Chi \cite{MR1737551} have essentially proved that Katok's conjecture is equivalent
to a conjecture formulated by Greenfield and Wallach in \cite{MR0320502} and which states that a globally
hypoelliptic vector field is a linear diophantine flow on the torus
(see~\cite{MR2478471}
for a review of the relation betwwen the two conjectures).

Progress towards this conjecture has been limited. A major
advance has been made by F.~and~J.~Rodriguez-Hertz
\cite{MR2191392} who showed that a cohomology-free flow on a
manifold $M$ is up to semi-conjugate to a
Diophantine linear flow on a torus of dimension equal to the
first Betti number of~$M$ (the Albanese variety of $M$); furthermore the 
semi-conjugacy is smooth. More
substantial progress has made for low dimensional manifolds. In
\cite{MR1645342} the analogous problem for diffeomorphisms is
solved for tori of dimension four or less. Recently independent
work of G.~Forni~\cite{MR2478471},
A.~Kocsard~\cite{MR2542719} and S.~Matsumoto~\cite{matsumoto}, have proved the Katok-Greenfield-Wallach
 conjecture when $\dim M \le
3$, using Taubes' proof of Weinstein's Conjecture~\cite{MR2350473}.

More recently Avila and Kocsard have recently announced in \cite{avila_kocsard} that the reduced cohomology of every minimal
circle diffeomorphism --- hence of very minimal flow on the
two-torus --- is one-dimensional; from this it follows easily
that,  up to a diffeomorphism, the only $C^\infty$-stable  minimal flows on the
two-torus are the diophantine linear flows.

\begin{theorem}\label{thm:linear}
  Let $\{\phi^t\}_{t\in \R}$ be a cohomology-free flow generated
  by a vector field~$X$ on a compact connected manifold~$M$.
  Then there are a (possibly non separated) topological Abelian
  group~$A$, a continuous homomorphism $t\in \R \mapsto a t \in
  A$, and a continuous injection $l:M\rightarrow A$, such that
  $l(\phi^t(y))=l(y)+ a t$ for every $y\in M$ and $t\in
  \R$. Furthermore there is a continuous projection $\pi$ of $A$
  onto the Albanese torus of $M$ such that $ \pi \circ l$ is
  F.~and~J.~Rodriguez-Hertz' semi-conjugacy.
\end{theorem}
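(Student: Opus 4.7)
The plan is to realize $A$ as the universal topological Abelian group receiving a continuous map from~$M$ and a continuous homomorphism from~$\R$ subject to the prescribed equivariance. Concretely, let $F(M)$ be the free topological Abelian group on the underlying topological space of~$M$, form the direct sum $F(M)\oplus\R$, and let $N$ be the algebraic subgroup generated by the relations
\[
  r_{y,t}\;=\;[\phi^t(y)]-[y]-(0,t),\qquad y\in M,\ t\in\R.
\]
Setting $A:=(F(M)\oplus\R)/N$ with the quotient topology, I would define $l\colon M\to A$ as the composition $y\mapsto[y]\mapsto[y]+N$ and set $at:=(0,t)+N$. Continuity of~$l$ and $a$, and the identity $l(\phi^ty)=l(y)+at$, are then tautological consequences of the construction; the quotient $A$ is not in general Hausdorff because $N$~need not be closed, which accounts for the ``possibly non separated'' qualification.

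The substantive step will be the injectivity of~$l$. Any element of $N$ is of the form
\[
  \sum_i n_i\bigl([\phi^{t_i}y_i]-[y_i]\bigr)\;+\;\Bigl(0,-\sum_i n_i t_i\Bigr),
\]
so by freeness of~$F(M)$ as an Abelian group on the underlying set of~$M$, the condition $[y_1]-[y_2]\in N$ amounts to the pair of equations
\[
  \sum_i n_i\bigl(\phi^{t_i}(y_i)-y_i\bigr)=y_1-y_2,
  \qquad
  \sum_i n_i t_i=0,
\]
the first being interpreted in the free Abelian group on the set~$M$. Regarding each index~$i$ as a directed edge from~$y_i$ to $\phi^{t_i}(y_i)$ with integer weight~$n_i$, the first equation is a flow-conservation identity on an oriented multigraph whose edges lie inside single $\phi^t$-orbits; an orbit-by-orbit balance argument forces $y_1$ and~$y_2$ onto a common orbit~$\mathcal O$. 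Parametrizing (the universal cover of)~$\mathcal O$ by~$\R$ via $s\mapsto\phi^sy_0$ and writing $y_i=\phi^{s_i}y_0$, the trivial Stokes formula for the exact form $ds$ yields $\tilde s_1-\tilde s_2=\sum_i n_it_i=0$, whence $y_1=y_2$.

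The projection to the Albanese torus is then formal. Let $j\colon M\to\mathrm{Alb}(M)$ denote F.\ and J.\ Rodriguez-Hertz' smooth semi-conjugacy and $v\in H_1(M;\R)$ the associated Diophantine rotation vector, so that $j(\phi^ty)=j(y)+tv$. The assignments $[y]\mapsto j(y)$ on~$M$ and $(0,t)\mapsto tv$ on~$\R$ extend, by the universal property of~$F(M)$, to a continuous homomorphism $\tilde\pi\colon F(M)\oplus\R\to\mathrm{Alb}(M)$; the Rodriguez-Hertz equivariance is exactly the vanishing of~$\tilde\pi$ on the generators of~$N$, so $\tilde\pi$ descends to the desired continuous projection $\pi\colon A\to\mathrm{Alb}(M)$, and the equality $\pi\circ l=j$ holds by construction.

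The main obstacle is the injectivity step, whose essence is the elementary observation that a signed chain of orbit-translations whose times cancel to zero cannot displace any point. The rest is bookkeeping with the universal property of the free topological Abelian group together with Rodriguez-Hertz' prior result. It is worth noting that the existence of $A$, $l$ and~$a$ with the stated equivariance and injectivity does not in fact use the cohomology-free hypothesis at all; that hypothesis enters only through the identification of the finite-dimensional Hausdorff quotient~$\pi$.
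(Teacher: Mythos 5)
Your construction is correct in outline, but it proves the theorem by a genuinely different and much more formal route than the paper. The paper realizes $A$ concretely as $\mathcal C^1(M)/\widetilde\Delta$, the de Rham one-currents modulo the group of loop currents, and obtains $l$ by twisting the tautological map $y\mapsto[\text{current of a path in $\Gamma_x$ from $x$ to $y$}]$ with the operator $L(T)(\eta)=T(\eta-dh_\eta)$, where $h_\eta$ solves $L_Xh_\eta=\eta(X)-c_\eta$; there the injectivity of $L_x$ is the substantive step, and it is exactly where cohomology-freeness enters (through the constancy of $h_\theta$ when $\theta(X)=0$, combined with the excision of retraced arcs). You instead take $A$ to be the universal topological Abelian group receiving a flow-equivariant map from $M$, so that equivariance and the factorization of the Rodriguez-Hertz map become tautologies and injectivity reduces to combinatorics in the free Abelian group on the set $M$. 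Both arguments establish the literal statement. What the paper's version buys is that $A$ is a quotient of a locally convex space, that the Albanese projection arises from the period map on currents rather than being imported from the Rodriguez-Hertz theorem, and that the image of $l$ inherits a linear (indeed diffeological) structure --- which, per the paper's closing remark, is the intended content of the theorem as a step toward Katok's conjecture. Your $A$ surjects onto the paper's $A$ by its universal property but carries no structure beyond that property, so your version of the statement carries correspondingly little information; your own closing observation makes this point.

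That closing observation, however, is also where your argument has a genuine (if repairable) gap. The identity $\tilde s_1-\tilde s_2=\sum_in_it_i$ accounts only for the edges carried by the orbit $\mathcal O$ of $y_1$ and $y_2$; you must also show that the edges supported on every other orbit $\mathcal O'$ contribute zero to $\sum_in_it_i$. This does follow, by applying the orbit coordinate of $\mathcal O'$ to the zero-boundary chain it carries, but only because $\mathcal O'$ is injectively parametrized by $\R$. Consequently the claim that injectivity ``does not use the cohomology-free hypothesis at all'' is false: it uses minimality of the flow (no fixed points, no periodic orbits), which is a nontrivial consequence of cohomology-freeness. For instance, if $z$ is a fixed point then $r_{z,s}=-(0,s)\in N$ for every $s$, so $[\phi^sy]-[y]=r_{y,s}+(0,s)\in N$ and $l$ collapses every orbit; two periodic orbits of periods $T$ and $T/2$ give a similar counterexample without fixed points. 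You should state explicitly that cohomology-free flows are minimal and that this is what licenses the single-valued coordinate $s\mapsto\phi^sy_0$ on each orbit and the vanishing of the time-sums on the auxiliary orbits.
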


\subsubsection*{Acknowledgments}
\label{sec:linear:acknowledgement} The authors would like to thank
A. Verjowsky and the Instituto de Matem\'aticas de Cuernavaca for
their hospitality. The second author is grateful to Armando Treibich
for his invitation to the Universit\'e d'Artois and to Federico
Rodriguez Hertz for explaining his work on this problem.

\section{Curves and Loops}
\label{first}

From now our standing hypothesis is that $(\phi^t)_{t\in \R}$ is
a cohomology-free flow generated by a vector field~$X$ on a
compact connected manifold~$M$.

Let
$I$ denote the unit interval $[0,1]$. With the term
``curves'', we shall mean
piecewise $C^1$-immersed parametrized curves $I\to M$, modulo the equivalence
relation given by piecewise
$C^1$-reparametrization.
The initial point
and end point of a curve $\gamma$ are the points
$\alpha(\gamma)=\gamma(0)$ and $\omega(\gamma)=\gamma(1)$.

Two curves $\gamma_1$ and $\gamma_2$ can be concatenated if
$\omega(\gamma_1)=\alpha(\gamma_2)$; with $\gamma=\gamma_1\gamma_2$
we shall denote the usual concatenation $\gamma(t)=\gamma_1(2t)$ for
$t\le 1/2$ and $\gamma(t)=\gamma_2(2t-1)$ for $t\ge 1/2$, $t\in I$.

For the sequel we restrict our consideration to the set $\Gamma$ of
curves which are finite concatenations of unoriented flow
segments and geodesic segments (for some fixed Riemannian
metric on~$M$) transverse to the flow. We endow $\Gamma$ with the
topology of uniform convergence (say for the uniform speed
parametrization of curves).

Curves in  $\Gamma$  have the following regularity property:

\begin{lemma}\label{lem:regular}
  Two curves in $\Gamma$ meet a finite collections of intervals
  (which may reduce to points). Furthermore there is a compact
  set $K\subset \Gamma$ such that every two points of $M$ can be
  joined by an element of $K$.
\end{lemma}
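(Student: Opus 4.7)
For the first claim, unwind the definition of $\Gamma$: every such curve is a finite concatenation of elementary pieces of two types, so it suffices to analyse the intersection of two elementary pieces $\sigma_1,\sigma_2$ and sum the contributions. Two flow segments lie on orbits of $X$ which either coincide or are disjoint, giving either a single interval or the empty set. A flow segment and a transverse geodesic meet in a discrete set of $M$ by transversality, hence in finitely many points by compactness. Two transverse geodesic segments either lie on a common maximal geodesic of $M$ (intersection is a union of at most two subarcs) or on distinct maximal geodesics; in the latter case any accumulation point of the intersection would, via the implicit function theorem at a transversal crossing, force equality of unit tangent vectors at the accumulation, and hence coincidence of the maximal geodesics by ODE uniqueness---a contradiction. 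Summing over the finitely many elementary pieces in each of the two given curves yields a finite union of intervals.

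For the second claim, first invoke the standard fact that a cohomology-free vector field is nowhere zero (a zero would produce the invariant distribution $\delta_p$, which together with any invariant probability measure gives two independent invariant distributions, contradicting one-dimensional cohomology). Cover $M$ by finitely many flowboxes $U_1,\dots,U_N$ with $U_i\cong(-\varepsilon,\varepsilon)\times D_i$, where $D_i$ is a codimension-one disk transverse to $X$. In each $D_i$ fix a basepoint $x_i$ together with a finite grid of auxiliary points chosen fine enough that any $d\in D_i$ can be joined to $x_i$ by a uniformly bounded number of short transverse geodesic hops in $M$. For every ordered pair $(i,j)$ fix, once and for all, a connecting curve $\gamma_{ij}\in\Gamma$ from $x_i$ to $x_j$; such a curve exists because $M$ is connected and any piecewise $C^1$ path can be rerouted through flowboxes. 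Given arbitrary $p,q\in M$, pick $U_i\ni p$, $U_j\ni q$ and form the concatenation: a flow segment from $p$ to its projection $p'\in D_i$, the standard hops from $p'$ to $x_i$, the fixed $\gamma_{ij}$, the symmetric hops from $x_j$ to the projection $q'\in D_j$ of $q$, and a flow segment from $q'$ to $q$. By construction, both the total length and the number of elementary pieces are uniformly bounded as $p,q$ vary. Parametrizing each resulting curve at uniform speed on $I$, the family $\mathcal{K}_0$ so produced is equi-Lipschitz and takes values in the compact $M$, so by Arzelà--Ascoli its closure $K\subset C(I,M)$ is compact, and every pair of points is joined by an element of $\mathcal{K}_0\subset K$.

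The main obstacle is to ensure that $K$ actually sits inside $\Gamma$: a uniform limit of transverse geodesic pieces could become tangent to $X$, and an elementary piece could collapse to a point with no well-defined tangent, so that the piecewise structure degenerates. This is handled by imposing at construction time two closed conditions on the family $\mathcal{K}_0$: a uniform positive lower bound on the angle between each geodesic piece and $X$ (which is automatic once the hops are chosen short enough that Euclidean directions dominate), and a uniform positive lower bound on the length of every nontrivial elementary piece (amalgamating two consecutive flow or geodesic pieces of the same type whenever a small piece threatens to disappear). Since both conditions survive uniform convergence, the limits in $K$ are still finite concatenations of honest flow segments and transverse geodesic segments, and thus $K\subset\Gamma$ as required.
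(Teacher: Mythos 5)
Your proof is correct in substance, and for the first claim it follows the same route as the paper (the paper disposes of it in one sentence; your case analysis flow/flow, flow/geodesic, geodesic/geodesic, with the ODE-uniqueness argument for distinct maximal geodesics, is exactly the missing detail). For the second claim the paper's construction is much leaner: it simply takes $K$ to be all concatenations of $N=[2\epsilon_0^{-1}\operatorname{diam}(M)]+1$ geodesic segments of length less than $\epsilon_0$, with compactness coming from the compact parametrization of short geodesic chains. Your flowbox construction, with fixed connectors $\gamma_{ij}$ and short hops inside the transversal disks $D_i$, is more elaborate but buys something real: the pieces of $\Gamma$ are required to be geodesic segments \emph{transverse to the flow}, and the paper's short minimizing geodesics between arbitrary nearby points need not be transverse to $X$, nor is the set of transverse ones closed under uniform limits. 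You identify and handle both issues (uniform transversality angle for hops near a fixed transversal, and degenerating pieces), which is the main gap in the paper's one-line argument. Two small caveats: your justification that a cohomology-free field is nowhere zero is not right as stated --- if $X(p)=0$ then $\delta_p$ \emph{is} an invariant probability measure, so it need not be "independent" of one; the correct route is that cohomology-freeness forces unique ergodicity with an invariant measure of full support, hence minimality, hence no fixed points. And the "amalgamation" of consecutive pieces of the same type only makes sense when they lie on a common orbit or geodesic; what you actually need (and what suffices) is the convention that a piece degenerating to a point is simply dropped from the concatenation, the limit then remaining a finite concatenation of honest pieces.
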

\begin{proof}
  The first statement is an immediate consequence of the choice of
  $\Gamma$: if two geodesic segments intersect, they do so on an
  interval; the same is true for flow segments. For the set $K$
  we can choose the set paths which are concatenation of $N$
  geodesic segments of length less than some positive $\epsilon_0$, where
  $N= [2 \epsilon_0^{-1}\operatorname{diam}(M)]+1$.
\end{proof}

For $x\in M$, we denote by $\Gamma _x$ the curves $\gamma \in
\Gamma$ with $\alpha(\gamma) = x$, that is the curves starting at
$x$. Finally we let $\Delta$ be the set of curves $\gamma\in \Gamma$
with $\alpha(\gamma)=\omega(\gamma)$, the set of closed loops in
$\Gamma$.

\section{Currents}
\label{currents}

Let $\Omega^1(M)$ denote the Fr\'echet space of $C^\infty$
differential one-forms with the $C^\infty$-topology and let
$\mathcal C^{0}(M) $ and $\mathcal C^{1}(M)$ be the
Fr\'echet spaces of de Rham currents on~$M$ of degree zero
and one, that is the dual space of $\Omega^0(M)=C^\infty(M)$
and the dual space of $\Omega^1(M)$, respectively. The
spaces $\mathcal C^{0}(M) $ and $\mathcal C^{1}(M)$ are
endowed with the vague topology.

To each parametrized curve  $\gamma$ it corresponds an integration
one-current $\widetilde \gamma$ given~by
\[
\widetilde \gamma(\eta) = \int_\gamma \eta, \qquad \forall \eta\in \Omega^1(M).
\]
Clearly the current $\widetilde \gamma$ does not depend upon the
choice of a parametrization of $\gamma$ so that the map $\gamma
\in \Gamma \to \widetilde \gamma \in \mathcal C^{1}(M)$ is well
defined and we set
\[
\widetilde \Gamma = \{\widetilde \gamma \in  \mathcal C^{1}(M)
\mid  \gamma
\in \Gamma\};
\]
we also denote by~${\widetilde \Gamma}_x$
  and~${\widetilde \Delta}$ the sets of currents images of
  curves in~$\Gamma_x$ and loops~in~${\Delta}$.

It is obvious from the definition that for any  $\gamma_1$ and
$\gamma_2$ in $\widetilde \Gamma$ for which the concatenation is defined we have
\[ \widetilde{(\gamma_1\gamma_2)} =
\widetilde \gamma_1+\widetilde \gamma_2.\]

The following  elementary propositions are  stated here for further
reference. We set $\mathcal M= \{ \delta_x\in \mathcal C^0(M) \mid x\in
M\}$.

\begin{proposition}
  \label{def:firsthomeo}
  The map $x\in M\mapsto \delta_x\in \mathcal
  M$ is continuous (in fact differentiable) and injective
  and hence a homeomorphism  $M \approx \mathcal M$.
\end{proposition}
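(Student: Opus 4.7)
The plan is to verify each claim in turn, using the fact that continuity in $\mathcal{C}^0(M)$ is by definition pointwise-on-test-functions (the vague topology).

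For continuity, I would take a sequence (or net) $x_n \to x$ in $M$ and check that $\delta_{x_n}(f) = f(x_n) \to f(x) = \delta_x(f)$ for every $f \in C^\infty(M)$; this is immediate from continuity of $f$. For differentiability along a smooth curve $x(t)$ in $M$, I would compute, for a fixed test function $f$,
\[
\frac{\delta_{x(t+s)}(f)-\delta_{x(t)}(f)}{s} = \frac{f(x(t+s))-f(x(t))}{s} \longrightarrow df_{x(t)}(x'(t))
\]
as $s\to 0$, using a first-order Taylor expansion of $f$ in a chart around $x(t)$; since this holds for every $f$, the map $t\mapsto \delta_{x(t)}$ is differentiable with derivative the current $f\mapsto df_{x(t)}(x'(t))$, and this gives differentiability of $x\mapsto \delta_x$ in the natural sense.

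For injectivity, if $x\neq y$ are distinct points of $M$, then using a bump function obtained from a chart (or from a partition of unity subordinate to a cover separating $x$ and $y$) one produces $f\in C^\infty(M)$ with $f(x)\neq f(y)$, so $\delta_x(f)\neq \delta_y(f)$ and hence $\delta_x \neq \delta_y$.

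Finally, for the homeomorphism statement I would invoke the standard fact that a continuous injection from a compact space to a Hausdorff space is a homeomorphism onto its image. Here $M$ is compact by hypothesis, and $\mathcal{C}^0(M)$ with the vague topology is Hausdorff (different distributions are separated by some test function), so $\mathcal{M}$ is Hausdorff; thus the bijection $M\to\mathcal{M}$ is a homeomorphism. I do not anticipate any genuine obstacle: the only point to be mildly careful about is that the vague topology on $\mathcal{C}^0(M)$ is Hausdorff, which follows from the density and point-separation properties of $C^\infty(M)$.
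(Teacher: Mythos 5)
Your proof is correct and is exactly the elementary argument the paper has in mind (the authors state this proposition without proof as one of the "elementary propositions... stated here for further reference"): weak-* continuity from pointwise evaluation, injectivity from bump functions, and the compact-to-Hausdorff criterion for the homeomorphism. The only point worth noting is that checking continuity by sequences is legitimate here because $M$, being a manifold, is first countable — and you have covered yourself by mentioning nets anyway.
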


\begin{proposition} For any $x\in M$ we have
  \label{prop:kernel}
  \begin{equation}
  \label{eq:lambda2} \widetilde\Delta= \{ T\in {\widetilde \Gamma} \mid
\partial T=0\} = \widetilde{\Gamma_x\cap \Delta}.
\end{equation} Hence ${\widetilde \Delta}$ is a closed set both in ${\widetilde \Gamma}$ and
in ${\widetilde \Gamma}_x$. Furthermore  $
{\widetilde \Gamma}$ and ${\widetilde \Gamma}_x$ are both invariant by translation
by ${\widetilde \Delta}$.
\end{proposition}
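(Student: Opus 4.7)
The plan is to prove the two displayed equalities in sequence, then derive the closedness and translation invariance as corollaries.

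First I would compute the boundary of an integration current. For any $\gamma\in\Gamma$ and any $f\in C^\infty(M)$, Stokes' theorem (which applies to piecewise $C^1$ curves) gives
\[
\partial\widetilde\gamma(f)=\widetilde\gamma(df)=\int_\gamma df = f(\omega(\gamma))-f(\alpha(\gamma)) = (\delta_{\omega(\gamma)}-\delta_{\alpha(\gamma)})(f).
\]
Hence $\partial\widetilde\gamma=\delta_{\omega(\gamma)}-\delta_{\alpha(\gamma)}$. By Proposition~\ref{def:firsthomeo} the evaluation map $y\mapsto\delta_y$ is injective, so $\partial\widetilde\gamma=0$ if and only if $\alpha(\gamma)=\omega(\gamma)$, i.e.\ $\gamma\in\Delta$. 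This gives the first equality $\widetilde\Delta=\{T\in\widetilde\Gamma\mid \partial T=0\}$.

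Next I would establish the less obvious equality $\widetilde\Delta=\widetilde{\Gamma_x\cap\Delta}$. The inclusion $\supseteq$ is immediate. For the reverse inclusion take any $\gamma\in\Delta$ based at some point $y\in M$. By Lemma~\ref{lem:regular} there is a curve $\sigma\in\Gamma$ joining $x$ to $y$; since $\Gamma$ is made of flow and geodesic segments, the reversal $\bar\sigma$ obtained by reversing the parametrization also lies in $\Gamma$, and $\widetilde{\bar\sigma}=-\widetilde\sigma$. The concatenation $\gamma':=\sigma\gamma\bar\sigma$ is then a well-defined element of $\Gamma_x\cap\Delta$, and by additivity of the integration currents under concatenation
\[
\widetilde{\gamma'}=\widetilde\sigma+\widetilde\gamma+\widetilde{\bar\sigma}=\widetilde\gamma,
\]
which proves the claim. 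This conjugation trick is really the only substantive point in the proof; the mild obstacle is to be sure that $\bar\sigma$ lies in $\Gamma$, which follows because flow and geodesic segments are by construction traversable in both directions.

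For the closedness statement, the boundary operator $\partial\colon\mathcal C^{1}(M)\to\mathcal C^{0}(M)$ is continuous for the vague topology (its transpose is the continuous operator $d\colon C^\infty(M)\to\Omega^1(M)$), so $\ker\partial$ is closed in $\mathcal C^{1}(M)$. The first equality then identifies $\widetilde\Delta$ as the intersection of this closed set with $\widetilde\Gamma$, so it is closed in $\widetilde\Gamma$; since $\widetilde\Gamma_x\subset\widetilde\Gamma$ and every element of $\widetilde\Delta$ lies in $\widetilde\Gamma_x$ by the second equality, $\widetilde\Delta$ is also closed in $\widetilde\Gamma_x$.

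Finally, for translation invariance, let $T=\widetilde\gamma\in\widetilde\Gamma$ with $\omega(\gamma)=z$ (respectively with $\gamma\in\Gamma_x$, so $z$ is its endpoint), and let $S=\widetilde\delta\in\widetilde\Delta$. Applying the equality $\widetilde\Delta=\widetilde{\Gamma_z\cap\Delta}$ at the point $z$, we may assume $\delta$ is a loop based at $z$. Then $\gamma\delta$ is a legitimate concatenation in $\Gamma$ (and in $\Gamma_x$ if $\gamma\in\Gamma_x$), and
\[
\widetilde{\gamma\delta}=\widetilde\gamma+\widetilde\delta=T+S,
\]
so $T+S\in\widetilde\Gamma$ (respectively in $\widetilde\Gamma_x$), completing the proof.
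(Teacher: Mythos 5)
Your proof is correct and follows essentially the same route as the paper: compute $\partial\widetilde\gamma=\delta_{\omega(\gamma)}-\delta_{\alpha(\gamma)}$ for the first equality, use the conjugation trick $\sigma\gamma\sigma^{-1}$ for the second, and rebase a loop at the relevant endpoint before concatenating to get translation invariance. The only (harmless) cosmetic difference is that the paper concatenates the rebased loop $\gamma_1\delta\gamma_1^{-1}$ in front of $\gamma$ rather than appending it at the endpoint, and your explicit remarks on the continuity of $\partial$ and on reversibility of curves in $\Gamma$ are welcome details the paper leaves implicit.
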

\begin{proof} Let $T={\widetilde \gamma}$ with $\gamma\in
  \Gamma$.
Since $\partial T = \delta_{\omega(\gamma)}- \delta_{\alpha(\gamma)}$
we have $\partial T = 0$ iff $\alpha(\gamma)=\omega(\gamma)$, i.e.\
iff $\gamma\in \Delta$. This shows that $\widetilde\Delta= \{
T\in {\widetilde \Gamma} \mid
\partial T=0\}$.

Suppose that $T={\widetilde \gamma}$ with $\gamma\in
  \Delta$. Letting
$\delta\in \Gamma$ be a curve such that $\alpha(\delta)= x$ and
$\omega(\delta) = \alpha(\gamma)$, we have $\gamma_1= \delta \gamma
\delta^{-1} \in \Delta \cap  \Gamma_x$ and $\widetilde
{\gamma_1}={\widetilde \gamma}=T$. Thus $T\in  \widetilde{\Gamma_x\cap \Delta}$ and
$\widetilde\Delta=  \widetilde{\Gamma_x\cap \Delta}$.

If $\gamma \in \Gamma_x$ and $\delta \in \Delta$ let $\gamma_1\in \Gamma_x$ be
a curve with $\omega(\gamma_1) = \alpha(\delta)$; then the   curve
$\gamma_1 \delta \gamma_1^{-1} \gamma$ belongs to  $\Gamma_x$ and
the current associated to it is equal to  $\widetilde
{\gamma} + {\widetilde \delta}$. This shows that  ${\widetilde \Gamma}_x$ (and
hence ${\widetilde \Gamma}$)  is invariant by translation by
$\widetilde{\delta} \in \widetilde{\Delta}$.
\end{proof}

\begin{remark}
  Observe that the above proposition implies that ${\widetilde
    \Delta}$ is an additive subgroup of space of the currents
  $\mathcal C^1(M)$. The Abelian group $A$ appearing in
  Theorem~\ref{thm:linear} will be defined as the quotient group
  $\mathcal C^1(M)/{\widetilde \Delta}$.
\end{remark}
\begin{proposition}
  \label{prop:repcurve} For $x\in M$, let $\pi_x: \mathcal C^1(M)\to
   \mathcal C^0(M)$ be the continuous affine map defined by
  \[\pi_x(T)=\partial T + \delta_x.\]
  For $\gamma\in \Gamma_x$, we have
  $\pi_x(\widetilde\gamma)= \delta_{\omega(\gamma)}$ and
  hence $\pi_x$ maps ${\widetilde \Gamma}_x$ onto $\mathcal M$.

  Furthermore, if $T_1, T_2\in {\widetilde \Gamma}_x$, we have
  $\pi_x(T_1) = \pi_x(T_2)$ if and only if $T_1-T_2 \in
  {\widetilde \Delta}$.
\end{proposition}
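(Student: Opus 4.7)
The plan is to address the three assertions of the proposition in order, leveraging the boundary formula $\partial \widetilde\gamma = \delta_{\omega(\gamma)} - \delta_{\alpha(\gamma)}$ that was already used in the proof of Proposition~\ref{prop:kernel}, together with the fact, established in Lemma~\ref{lem:regular}, that any two points of $M$ can be joined by a curve of $\Gamma$.

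For the first assertion, I would simply compute: if $\gamma\in\Gamma_x$ then $\alpha(\gamma)=x$ and
\[
\pi_x(\widetilde\gamma)=\partial\widetilde\gamma+\delta_x=\bigl(\delta_{\omega(\gamma)}-\delta_x\bigr)+\delta_x=\delta_{\omega(\gamma)}.
\]
Surjectivity of $\pi_x\colon\widetilde\Gamma_x\to\mathcal M$ is then immediate, since for any $y\in M$ Lemma~\ref{lem:regular} produces a curve $\gamma\in\Gamma$ starting at $x$ and ending at $y$, which yields a preimage $\widetilde\gamma\in\widetilde\Gamma_x$ of $\delta_y$.

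The substantive point is the kernel characterization. Given $T_1,T_2\in\widetilde\Gamma_x$, write $T_i=\widetilde{\gamma_i}$ with $\gamma_i\in\Gamma_x$. The implication $T_1-T_2\in\widetilde\Delta\Rightarrow\pi_x(T_1)=\pi_x(T_2)$ is a one-line consequence of the first part of Proposition~\ref{prop:kernel}, since every element of $\widetilde\Delta$ has zero boundary. For the converse, assume $\pi_x(T_1)=\pi_x(T_2)$; equivalently $\partial(T_1-T_2)=0$. I would then consider the concatenation $\gamma_1\gamma_2^{-1}$, which lies in $\Gamma_x$ and, having $\omega(\gamma_1\gamma_2^{-1})=\alpha(\gamma_2)=x$, is in fact a loop in $\Gamma_x\cap\Delta$. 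The additivity of the current map under concatenation, combined with the obvious identity $\widetilde{\gamma_2^{-1}}=-\widetilde{\gamma_2}$, gives
\[
T_1-T_2=\widetilde{\gamma_1}-\widetilde{\gamma_2}=\widetilde{\gamma_1\gamma_2^{-1}}\in\widetilde{\Gamma_x\cap\Delta}=\widetilde\Delta,
\]
the last equality being the content of Proposition~\ref{prop:kernel}.

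I do not anticipate any real obstacle: the only thing worth checking carefully is that $\Gamma$ is indeed closed under concatenation and under reversal of the parametrization, so that $\gamma_1\gamma_2^{-1}$ genuinely belongs to $\Gamma$ and its integration current equals $\widetilde{\gamma_1}-\widetilde{\gamma_2}$. Both facts follow from the definition of $\Gamma$ as finite concatenations of unoriented flow and geodesic segments, together with the equivalence relation by piecewise $C^1$-reparametrization.
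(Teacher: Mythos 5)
Your argument is correct and follows essentially the same route as the paper: the boundary computation $\partial\widetilde\gamma=\delta_{\omega(\gamma)}-\delta_{\alpha(\gamma)}$ for the first part, and for the kernel characterization the formation of the concatenated loop (the paper uses $\gamma_1^{-1}\gamma_2$, you use $\gamma_1\gamma_2^{-1}$, which is immaterial) followed by an appeal to Proposition~\ref{prop:kernel}. Your explicit check that $\omega(\gamma_1)=\omega(\gamma_2)$ is needed for the concatenation to be defined is a welcome detail the paper leaves implicit.
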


\begin{proof} The first affirmation is obvious.
Suppose that $\pi_x(T_1) = \pi_x(T_2)$ where $T_i={\widetilde{\gamma}_i}$, $\gamma_i\in
  \Gamma_x$. Then
  $\gamma_1^{-1}\gamma_2\in \Gamma$, thus $(T_1-T_2) \in
  {\widetilde \Gamma}$ and $\partial (T_1-T_2) =0$. By Proposition~\ref{prop:kernel} we have $(T_1-T_2)\in {\widetilde \Delta}$.
\end{proof}

\begin{corollary}
  \label{coro:homeo3}
  The map $\pi_x$ induces a homeomorphism   \[p_x:
  {{\widetilde \Gamma}_x}/{\widetilde \Delta} \to \mathcal M.\] Hence
  \[
   {{\widetilde \Gamma}_x}/{\widetilde \Delta}
  \approx \mathcal M \approx M.
  \]
\end{corollary}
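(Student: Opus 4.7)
The plan is to deduce the result directly from Proposition~\ref{prop:repcurve}. That proposition shows that $\pi_x$ is a continuous surjection $\widetilde{\Gamma}_x \to \mathcal{M}$ whose fibers are exactly the cosets of $\widetilde{\Delta}$, so passing to the quotient yields a well-defined continuous bijection $p_x : \widetilde{\Gamma}_x/\widetilde{\Delta} \to \mathcal{M}$. Only the continuity of $p_x^{-1}$ remains. Since $\mathcal{M}\approx M$ is first countable (Proposition~\ref{def:firsthomeo}), I would verify this by establishing sequential continuity.

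Given $y_n \to y_0$ in $M$, I fix once and for all a curve $\gamma_0 \in \Gamma_x$ with $\omega(\gamma_0) = y_0$ (which exists by Lemma~\ref{lem:regular}), and use the compact set $K \subset \Gamma$ from the same lemma to pick $\kappa_n \in K$ joining $y_0$ to $y_n$. The concatenation $\gamma_n := \gamma_0 \kappa_n$ lies in $\Gamma_x$ and satisfies $\omega(\gamma_n) = y_n$, so
\[
p_x^{-1}(\delta_{y_n}) = [\,\widetilde{\gamma_0} + \widetilde{\kappa_n}\,],
\]
and it suffices to prove that $[\,\widetilde{\gamma_0} + \widetilde{\kappa_n}\,] \to [\,\widetilde{\gamma_0}\,]$ in $\widetilde{\Gamma}_x/\widetilde{\Delta}$. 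Here I invoke the topological fact (valid in any space) that a sequence converges to a given limit as soon as every one of its subsequences admits a further subsequence converging to that limit. By compactness of $K$, any subsequence $(\kappa_{n_k})$ has a uniformly convergent subsubsequence $\kappa_{n_{k_j}} \to \kappa_\infty \in K$, and the continuity of the endpoint maps $\alpha,\omega$ in the uniform topology forces $\alpha(\kappa_\infty) = \omega(\kappa_\infty) = y_0$, so $\kappa_\infty \in \Delta$ and $\widetilde{\kappa_\infty} \in \widetilde{\Delta}$ by Proposition~\ref{prop:kernel}.

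The delicate step is then to promote this uniform convergence of curves to vague convergence $\widetilde{\kappa_{n_{k_j}}} \to \widetilde{\kappa_\infty}$ of the associated integration currents in $\mathcal{C}^1(M)$; in general, uniform convergence of parametrized curves does not control tangent vectors, so this is where one has to use the specific structure of $K$. It works because elements of $K$ are concatenations of a fixed number of short geodesic segments, each of which is determined by its endpoints and depends smoothly on them inside a normal neighborhood; hence the tangent vectors along the curves converge, which gives convergence of the line integrals against any smooth $1$-form. Once this is available, passing to the quotient gives $[\,\widetilde{\gamma_0} + \widetilde{\kappa_{n_{k_j}}}\,] \to [\,\widetilde{\gamma_0}\,]$, the subsequence principle yields convergence of the full sequence, and $p_x^{-1}$ is sequentially, hence topologically, continuous. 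The identifications $\widetilde{\Gamma}_x/\widetilde{\Delta} \approx \mathcal{M} \approx M$ then follow immediately by composing with Proposition~\ref{def:firsthomeo}.
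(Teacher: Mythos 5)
Your proof is correct, and it reaches the conclusion from the same two inputs the paper uses --- Proposition~\ref{prop:repcurve} for bijectivity of $p_x$ and the compact set $K$ of Lemma~\ref{lem:regular} --- but packages the final step differently. The paper observes that the currents of elements of $K\cap\Gamma_x$ form a compact set surjecting onto ${\widetilde\Gamma_x}/{\widetilde\Delta}$, so the quotient is compact, and then invokes the fact that a continuous bijection from a compact space onto a Hausdorff space ($\mathcal M\approx M$) is a homeomorphism. You instead unwind that argument into a direct verification that $p_x^{-1}$ is sequentially continuous via the subsequence-of-a-subsequence principle, which suffices because $\mathcal M$ is first countable. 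The two arguments are equivalent in substance, but yours has the merit of making explicit a point the paper's one-line compactness claim leaves implicit: that uniform convergence of curves in $K$ (which a priori controls only $C^0$ data) really does yield vague convergence of the associated integration currents, precisely because elements of $K$ are concatenations of a bounded number of short geodesic segments depending smoothly on their endpoints. That verification is needed in either version --- without it the assertion that the image of $K\cap\Gamma_x$ in $\mathcal C^1(M)$ is weakly compact would be unjustified --- so the paper's route is shorter while yours is more self-contained on this point.
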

\begin{proof}
  The induced map $p_x$ is continuous for the quotient
  topology  and injective  by Proposition~\ref{prop:repcurve}. Let $K$ be as in Lemma~\ref{lem:regular}.  The set of currents
  images of elements in $K\cap \Gamma_x$ is compact in the weak topology
  and surjects onto ${{\widetilde \Gamma}_x}/{\widetilde
    \Delta}$; hence ${{\widetilde \Gamma}_x}/{\widetilde
    \Delta}$ is compact. Since $\mathcal M$ is a Hausdorff
  space, the map $p_x$ is a homeomorphism.
\end{proof}

\section{Twisting the embedding}

The hypothesis of Theorem~\ref{thm:linear} imply that for every
$\eta\in \Omega^1(M)$ there exist a $C^\infty$~function $h_\eta : M
\to \R$ and a constant $c_\eta\in \R$ such that
\begin{equation}
  \label{eq:cohomup} L_X h_\eta =\eta( X) - c_\eta.
\end{equation} The function $h_\eta$ is only defined up to
a constant, by the unique ergodicity of the flow
$(\phi^t)$. We shall need the following observations.
\begin{remark}
  \label{rem:changeform} If $\eta_1, \eta_2\in
\Omega^1(M)$ and $ \eta_1( X)-\eta_2( X)= C$ is a
constant function then $h_{\eta_1}- h_{\eta_2}$ is a
constant and $c_{\eta_1}-c_{\eta_2}=C$.
\end{remark}

\begin{proposition}
  \label{rem:opmapth} Let $C^\infty_0 (M)$ be the quotient
space $C^\infty(M)$ modulo constants (which we can also
identify to the space $C^\infty$ functions of $\mu$-average
zero). Then the maps
  \[
  \eta\in \Omega^1(M) \mapsto h_\eta \in C_0^\infty (M)
  \]
  and
  \[
  \eta\in \Omega^1(M) \mapsto dh_\eta \in \Omega^1(M)
  \]
  are continuous if $C_0^\infty (M)$ and $\Omega^1(M) $ are
endowed with the $C^\infty$ Fr\'echet topology.
\end{proposition}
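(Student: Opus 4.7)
The plan is to identify the map $\eta\mapsto h_\eta$ as a composition of three continuous maps between Fréchet spaces, the only non-trivial one being the inverse of $L_X$ on functions of zero average, whose continuity will follow from the open mapping theorem.

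\textbf{Step 1: Identification of $c_\eta$.} Integrating \eqref{eq:cohomup} against the unique invariant probability measure $\mu$ (which exists and is unique by the cohomology-free hypothesis: unique ergodicity follows from the fact that two independent invariant measures would produce a reduced cohomology of dimension at least two) gives $c_\eta=\int_M \eta(X)\,\mathrm d\mu$. Hence the assignment
\[
\Phi\colon \eta\in \Omega^1(M)\longmapsto \eta(X)-c_\eta\in C_0^\infty(M)
\]
is continuous, being the composition of the continuous contraction $\eta\mapsto\eta(X)\in C^\infty(M)$ with the continuous projection $C^\infty(M)\twoheadrightarrow C_0^\infty(M)$.

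\textbf{Step 2: The inverse of the Lie derivative.} The operator $L_X$ vanishes on constants and, by unique ergodicity, maps $C^\infty(M)$ into $C_0^\infty(M)$; hence it descends to a continuous linear map
\[
\overline{L_X}\colon C_0^\infty(M)\longrightarrow C_0^\infty(M).
\]
By the cohomology-free hypothesis $\overline{L_X}$ is surjective, and injective since $L_X h=0$ together with unique ergodicity forces $h$ to be constant. Both the source and target are Fréchet spaces (quotients of $C^\infty(M)$ by a closed subspace), so the open mapping theorem yields a continuous inverse $\overline{L_X}^{-1}\colon C_0^\infty(M)\to C_0^\infty(M)$.

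\textbf{Step 3: Conclusion.} The map $\eta\mapsto h_\eta$ is, in $C_0^\infty(M)$, nothing but $\overline{L_X}^{-1}\circ\Phi$, hence continuous. The continuity of $\eta\mapsto dh_\eta$ then follows by composing with the continuous exterior derivative $d\colon C_0^\infty(M)\to \Omega^1(M)$ (well defined since $d$ kills constants).

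\textbf{Main obstacle.} The only delicate point is the applicability of the open mapping theorem, which requires $C_0^\infty(M)$ to be Fréchet and $\overline{L_X}$ to be a genuine continuous bijection; this in turn uses the cohomology-free hypothesis twice (for surjectivity onto $C_0^\infty$ and for ruling out non-trivial invariant distributions beyond $\mu$), and unique ergodicity for injectivity. Once these structural facts are in place, the proof reduces to function-space formalities.
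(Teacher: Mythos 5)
Your proof is correct and follows essentially the same route as the paper: the paper also observes that $L_X$ is a continuous bijection of the Fréchet space $C_0^\infty(M)$ onto itself (by the cohomology-free hypothesis and unique ergodicity), invokes the open mapping theorem to invert it, and composes with the continuous contraction $\eta\mapsto\eta(X)$. Your write-up merely makes explicit the identification $c_\eta=\int_M\eta(X)\,\mathrm d\mu$ and the final composition with $d$, which the paper leaves implicit.
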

\begin{proof} The Lie derivative $L_X: C^\infty_0 (M) \to
C^\infty_0 (M)$, is a continuous linear operator which, by
hypothesis and by the unique ergodicity of $\mu$, is also
bijective.  Since $ C^\infty_0 (M) $ is a Fr\'echet space,
the open mapping theorem implies that this map is an
isomorphism.  The map $\Omega^1(M) \to C^\infty (M) $,
$\eta \mapsto \eta(X)$, being clearly continuous, our
claim follows.
\end{proof}

We shall use the family of functions $h_\eta$ to ``twist''
our homeomorphism $M\approx {\widetilde
  \Gamma_x}/{\widetilde \Delta}$.

\begin{definition}
  \label{def:lhat}
  We define the map  $L:  \mathcal C^1(M)\to  \mathcal C^1(M)$
  \begin{equation}
    \label{eq:hatl}
    L(T)(\eta)=
    T(\eta)- \partial T( h_{\eta}) = T(\eta -
    dh_{\eta} )
  \end{equation}
 (which is well defined since $h_\eta$ is defined up to a
 constant).
\end{definition}

The following lemma is obvious.

\begin{lemma}
  \label{def:lhat1}
  The map $L$ is a continuous linear operator (in fact a
  projection). The restriction of $L$ to ${\widetilde \Delta}$ is the
  identity map of ${\widetilde \Delta}$.
\end{lemma}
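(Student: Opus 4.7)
The plan is to verify in turn that $L$ is a well-defined, linear, continuous self-map of $\mathcal C^1(M)$, that it squares to itself, and that it fixes $\widetilde\Delta$ pointwise. The author calls this ``obvious,'' so the argument is really just a matter of assembling the previous observations in the right order; the only mildly non-trivial point will be the identity $L\circ L = L$, which requires invoking Remark~\ref{rem:changeform}.

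First I would check that the formula $L(T)(\eta)=T(\eta)-\partial T(h_\eta)$ genuinely defines a current. The ambiguity in the choice of representative of $h_\eta \in C^\infty_0(M)$ is harmless because $\partial T$ annihilates constants: indeed, for the constant function $1$, $\partial T(1)=T(d1)=0$. Linearity in $\eta$ follows because $\eta\mapsto \eta(X)$ is linear and $L_X : C_0^\infty(M)\to C_0^\infty(M)$ is a bijection (Proposition~\ref{rem:opmapth}), so $\eta\mapsto h_\eta$ is linear as a map into $C_0^\infty(M)$; continuity of $\eta\mapsto L(T)(\eta)$ is then immediate from the continuity of $\eta\mapsto h_\eta$ asserted in the same proposition.

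Next, the continuity and linearity of $L$ as a map $\mathcal C^1(M)\to \mathcal C^1(M)$ in the vague topology. Linearity in $T$ is clear from the defining formula, since both $T\mapsto T(\eta)$ and $T\mapsto \partial T(h_\eta)$ are linear. For continuity in the vague topology it suffices to check, for each fixed $\eta\in\Omega^1(M)$, that $T\mapsto T(\eta)-\partial T(h_\eta)$ is continuous; this is a combination of vague continuity of evaluation and continuity of the boundary operator $\partial:\mathcal C^1(M)\to\mathcal C^0(M)$.

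For the projection property, I would compute $L^2(T)(\eta)=L(T)(\eta-dh_\eta)=T\bigl(\eta-dh_\eta-dh_{\eta-dh_\eta}\bigr)$ and argue that $h_{\eta-dh_\eta}$ is constant. Using $L_X h_\eta=\eta(X)-c_\eta$ one has $(\eta-dh_\eta)(X)=c_\eta$, a constant, so Remark~\ref{rem:changeform} applied to $\eta_1=\eta-dh_\eta$ and $\eta_2=0$ (for which $h_0=0$ mod constants) forces $h_{\eta-dh_\eta}$ to be a constant, and thus $dh_{\eta-dh_\eta}=0$. Hence $L^2(T)=L(T)$. Finally, if $T\in\widetilde\Delta$ then by Proposition~\ref{prop:kernel} we have $\partial T=0$, so the defining formula collapses to $L(T)(\eta)=T(\eta)$, i.e.\ $L(T)=T$. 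The only ``obstacle,'' such as it is, is the appeal to Remark~\ref{rem:changeform} in the square-identity; everything else is immediate from the framework already in place.
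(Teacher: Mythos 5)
Your proof is correct; the paper simply declares this lemma obvious and gives no argument, and what you have written is the natural fleshing-out of that claim (well-definedness via $\partial T(1)=T(d1)=0$, continuity via Proposition~\ref{rem:opmapth}, the identity $L^2=L$ from $(\eta-dh_\eta)(X)=c_\eta$ being constant, and $L|_{\widetilde\Delta}=\mathrm{id}$ from $\partial T=0$). No gaps.
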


Let $L_x$ be the restriction of $L$ to ${\widetilde
  \Gamma}_x$.

\section{Injectivity of~$L_x$}

The aim of this section is to show the following proposition.
\begin{proposition}
  \label{prop:inj}
  The map $L_x: {\widetilde \Gamma}_x \to  \mathcal C^1(M)$
  is a continuous injection.
\end{proposition}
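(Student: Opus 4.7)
The plan is to set $T := T_1 - T_2 = \widetilde{\gamma}$ where $\gamma := \gamma_2^{-1}\gamma_1 \in \Gamma$ has endpoints $\omega(\gamma_2)$ and $\omega(\gamma_1)$, and to deduce $T = 0$ from $L(T) = 0$. Using the identity $L(T)(\eta) = T(\eta - dh_\eta)$ together with the observation that, by unique ergodicity, $h_\eta$ is constant precisely when $\eta(X) - c_\eta \equiv 0$, the vanishing of $L(T)$ is equivalent to $T(\zeta) = 0$ for every $\zeta \in \Omega^1(M)$ with $\zeta(X)$ constant.

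First I would test against $\zeta$ with $\zeta(X) = 0$. Decomposing $\gamma$ as a concatenation of flow segments $\sigma_i$ and transverse geodesic arcs $\tau_j$, the flow pieces contribute nothing (as $\sigma_i' \parallel X$), so the transverse current $T_\perp := \sum_j \widetilde{\tau_j}$ satisfies $T_\perp(\zeta) = 0$ on the hyperplane $\{\zeta : \zeta(X) = 0\}$. To upgrade this to $T_\perp = 0$ on all of $\Omega^1(M)$, I would test against forms of the shape $f\alpha$, with $\alpha := X^\flat/\|X\|^2$, which is well defined because cohomology-freeness forces $X$ to be nowhere vanishing; since each $\tau_j'$ is orthogonal to $X$, one has $\alpha(\tau_j') = 0$ and hence $T_\perp(f\alpha) = 0$ for every $f \in C^\infty(M)$. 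The decomposition $\eta = \eta(X)\alpha + (\eta - \eta(X)\alpha)$ then gives $T_\perp = 0$ as a current, so $T$ equals $\sum_i \widetilde{\sigma_i}$, a linear combination of flow-segment currents only.

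Next I would decompose $T = \sum_k T_k$ orbit by orbit, with $T_k$ collecting those $\widetilde{\sigma_i}$ supported on the flow orbit $O_k$. Cohomology-freeness implies minimality, and together with $\dim M \ge 2$ this rules out periodic orbits (the case $M = S^1$ is handled by a direct computation), so each $O_k$ is diffeomorphic to $\R$, and compactly supported $1$-currents on $\R$ are determined by their boundary. Because each $\partial \widetilde{\sigma_i}$ has zero total mass, the restriction of $\partial T = \delta_{\omega(\gamma_1)} - \delta_{\omega(\gamma_2)}$ to $O_k$ must also have zero total mass; this excludes the possibility of $O_k$ containing only one of the points $\omega(\gamma_1), \omega(\gamma_2)$ and forces both endpoints onto a common orbit $O$. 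It also yields $T_k = 0$ for $k \neq O$ and identifies $T$ with $\widetilde{\sigma}$, where $\sigma$ is the flow segment in $O$ joining $\omega(\gamma_2)$ to $\omega(\gamma_1)$.

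Finally, a direct calculation using the cohomological equation $L_X h_\eta = \eta(X) - c_\eta$ yields $L(\widetilde{\sigma})(\eta) = c_\eta s$, where $s$ is the signed duration of $\sigma$. The hypothesis $L(T) = 0$ then forces $c_\eta s = 0$ for every $\eta$; taking $\eta = \alpha$, for which $c_\alpha = 1$, gives $s = 0$, so $\sigma$ is trivial and $T = 0$. The hard part will be the orbit-by-orbit mass-balance step, which couples the topology of each flow orbit with the flow-and-transverse structure of $\gamma$ in order to collapse the current $T$ to a single flow-segment current; once in that form, the conclusion is immediate.
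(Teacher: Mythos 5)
Your opening reduction is correct and clean: since $(\eta-dh_\eta)(X)=c_\eta$ is constant and, conversely, any $\zeta$ with $\zeta(X)$ constant has $h_\zeta$ constant, the condition $L(T)=0$ is indeed equivalent to $T(\zeta)=0$ for all $\zeta$ with $\zeta(X)$ constant. The final steps (the orbit-by-orbit boundary-mass bookkeeping, the computation $L(\widetilde\sigma)(\eta)=c_\eta s$, and taking $\eta$ with $c_\eta=1$) are also fine. But the argument breaks at the step that carries all the weight: the claim that $\alpha(\tau_j')=0$ for $\alpha=X^\flat/\|X\|^2$. The curves in $\Gamma$ are geodesic segments \emph{transverse} to the flow, not \emph{orthogonal} to it; transversality only says $\tau_j'(t)\notin\R X(\tau_j(t))$, and in dimension $\ge 2$ such a vector generically has $\langle \tau_j',X\rangle\neq 0$. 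So $T_\perp(f\alpha)$ has no reason to vanish, and the conclusion $T_\perp=0$ does not follow. You cannot repair this by redefining $\Gamma$: a geodesic cannot in general be required to stay orthogonal to $X$ along its whole length, and replacing geodesics by integral curves of the distribution $X^\perp$ destroys the regularity properties (two curves meeting in finitely many intervals, the compact connecting family $K$) on which the rest of the construction depends.

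What you actually obtain from testing against $\{\zeta:\zeta(X)=0\}$ is only that $T_\perp$ is an ``$X$-directional'' current, i.e.\ $T_\perp(\eta)=D(\eta(X))$ for some distribution $D$; to show that the transverse pieces contribute nothing one must rule out that a nontrivial combination of transverse arcs can be $X$-directional. That is precisely where the paper works: it first excises all retraced arcs (so that mutually cancelling pairs $\tau,\tau^{-1}$ and partial overlaps are removed), then uses the regularity lemma to find a point where the remaining union is an embedded arc transverse to $X$, and builds a bump one-form $\theta$ with $\theta(X)\equiv 0$ and $\sum_i\int_{\gamma_i}\theta\neq 0$, contradicting $T(\theta)=0$. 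Some argument of this multiplicity-counting type is unavoidable here; your orthogonality shortcut skips it, and with it the essential content of the proof. Note also that if $T_\perp$ is merely $X$-directional rather than zero, it re-enters the pairing $T(\zeta)$ for $\zeta(X)=c\neq 0$ and contaminates your final computation, so the gap is not localized.
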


\begin{definition}
  \label{def:broadequiv} We say that a finite family $\{
\gamma_1,\ldots, \gamma_m\}\subset \Gamma$ is broadly
equivalent to a finite family $\{ \beta_1,\ldots,
\beta_n\}\subset \Gamma$, if
  \[
  \sum_{i=1}^m {\widetilde \gamma}_i = \sum_{i=1}^n {\widetilde\beta}_i.
  \]
\end{definition}

Clearly broad equivalence is an equivalence relation.

\begin{definition} We say that a curve $\gamma\in \Gamma$
contains a retraced arc~$r$ if $\gamma=arbr^{-1}c$, with
$a$,$b$, $c$ and $r\in \Gamma$.  Similarly, we say that two
given curves $\gamma_1, \gamma_2, \in \Gamma $ contain a
retraced arc~$r$ if $\gamma_1=arb$ and $\gamma_2=cr^{-1}d $
with  $a$,$b$, $c$, $d$ and $r\in \Gamma$.
\end{definition}

From a set of curves in $\Gamma$ we can, in an iterative way, obtain
a new set of curves, broadly equivalent to the given set and without
retraced arcs.

In fact if $\{\gamma=arbr^{-1}c\}$ we say that the set of
curves $\{\gamma_1=ac, \gamma_2=b\} $ is the set {\em
obtained from $\{\gamma\}$ by simple excision of the retraced
arc $r$}. (Observe that $\gamma_2$ is a closed curve).

If a set of two curves is given, $\{\gamma_1, \gamma_2\} $, such
that $\gamma_1=arb$ and $\gamma_2=cr^{-1}d $ we have two cases:
if $\gamma_2$ is closed, the \emph{simple excision of the retraced
  arc $r$} will yield a single curve set $\{\gamma_3=adcb\}$; if
$\gamma_2$ is open the simple excision of $r$ will result in the
set of two curves $\{\gamma_3=ad, \gamma_4=cb\}$.

It is clear in the above procedure that after a simple
excision the new set of curves is broadly equivalent to the
original set.

The simple excision of a retraced arc from one or two arcs
in a family of broken arcs $\gamma^0_1$, $\gamma^0_2,\ldots,
\gamma^0_n$ yields a new family of broken arcs; successive
simple excisions will lead to a family of broken arcs
$\gamma_1$, $\gamma_2,\ldots, \gamma_{n'}$ which we say
\emph{obtained by maximal excision from $\gamma^0_1,
  \gamma^0_2,\ldots, \gamma^0_n$} if it does not contain
any further retraced arcs.

The proof of the following two elementary lemmata is obtained by
induction on the number of excisions and by a direct application of
the definition of excision.

\begin{lemma} If the sequence $G_j=\{ \gamma_1^j, \gamma_2^j,\ldots, \gamma_{n_j}^j\}$ of finite families of curves
  is obtained
by successive excisions from $G_0=\{ \gamma_1^0, \gamma_2^0,\ldots,
\gamma_{n_0}^0\}$ then
  \begin{enumerate}
  \item we have $\partial G_j = \partial G_0$, for all
$j\ge0$, where $\partial G_j =\sum_{i=1}^{n_j} \partial
\gamma_i^j$.
  \item $G_j=\{ \gamma_1^j, \gamma_2^j,\ldots,
 \gamma_{n_j}^j\}$ is broadly equivalent to $G_0$,
for all $j\ge0$.
  \end{enumerate}
\end{lemma}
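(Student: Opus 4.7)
The plan is to prove both statements simultaneously by induction on $j$, the number of successive excisions. The base case $j=0$ is tautological. For the inductive step, it suffices to check that a \emph{single} excision preserves both the total boundary $\partial G$ and the sum of associated currents $\sum \widetilde{\gamma_i}$; the inductive hypothesis then propagates this from $G_0$ through $G_{j-1}$ to $G_j$. Thus the entire content of the lemma is reduced to an elementary verification for each of the three shapes of simple excision described just before the statement.

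First I would handle the one-curve excision: $\gamma = arbr^{-1}c$ replaced by $\{\gamma_1 = ac,\; \gamma_2 = b\}$. Because the integration current of a curve is additive under concatenation and satisfies $\widetilde{r^{-1}} = -\widetilde r$, one has
\[
\widetilde{\gamma} = \widetilde a + \widetilde r + \widetilde b - \widetilde r + \widetilde c = \widetilde a + \widetilde c + \widetilde b = \widetilde{\gamma_1} + \widetilde{\gamma_2},
\]
which gives broad equivalence. For the boundaries, $b$ is closed so $\partial \widetilde{\gamma_2} = 0$, while $\alpha(\gamma_1)=\alpha(\gamma)$ and $\omega(\gamma_1)=\omega(\gamma)$, so $\partial \widetilde{\gamma_1} = \partial \widetilde{\gamma}$.

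Next I would treat the two-curve cases with $\gamma_1 = arb$ and $\gamma_2 = cr^{-1}d$. If $\gamma_2$ is closed, the prescribed replacement is the single curve $\gamma_3 = adcb$; one checks from $\omega(a) = \alpha(r) = \omega(r^{-1}) = \alpha(d)$ and from $\alpha(c) = \omega(d)$ (closedness of $\gamma_2$) that the concatenation $adcb$ is well defined, and then
\[
\widetilde{\gamma_1} + \widetilde{\gamma_2} = \widetilde a + \widetilde b + \widetilde c + \widetilde d = \widetilde{\gamma_3},
\]
while $\partial \widetilde{\gamma_3} = \delta_{\omega(b)} - \delta_{\alpha(a)} = \partial \widetilde{\gamma_1} + \partial \widetilde{\gamma_2}$. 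If instead $\gamma_2$ is open, the replacement is $\{\gamma_3 = ad,\; \gamma_4 = cb\}$, and the same bookkeeping, together with the cancellations $\delta_{\omega(r)}$ from $\alpha(b)$ and $\delta_{\alpha(r)}$ from $\omega(a),\alpha(d)$, gives both identities at once.

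There is really no obstacle here: the lemma is essentially a definitional tautology once one notes that $\widetilde{r} + \widetilde{r^{-1}} = 0$ and that boundaries telescope along concatenations. The only thing to be careful about is a clean enumeration of the three excision patterns so that no case is overlooked; beyond this the proof is pure bookkeeping.
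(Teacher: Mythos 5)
Your proof is correct and follows exactly the route the paper indicates (the paper only remarks that the lemma follows "by induction on the number of excisions and by a direct application of the definition of excision"); you have simply filled in the case-by-case bookkeeping. No issues.
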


\begin{lemma} Let $\gamma\in \Gamma$. Then there exists a set of curves obtained by
maximal excision $G=\{\gamma_1$, $\gamma_2,\ldots,
\gamma_{n}\}$ from $\gamma$ such that $\gamma_2, \ldots,
\gamma_n$ are closed and $\gamma_1$ is closed if and only
if $\gamma$ is.
\end{lemma}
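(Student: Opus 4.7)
\emph{Plan.} I would prove the lemma by induction on the number of simple excisions, carrying the following invariant through every intermediate family $G_j$ in the sequence: at most one curve in $G_j$ fails to be closed, and if such a non-closed curve exists its initial and final points are respectively $\alpha(\gamma)$ and $\omega(\gamma)$. The base case $G_0=\{\gamma\}$ is immediate, and termination of the excision procedure is automatic because each simple excision strictly reduces the total number of flow and geodesic segments appearing in the family, so any chain of excisions must end at a maximally excised $G$.

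For the inductive step I would treat the excision types case by case. A single-curve excision on $\delta = arbr^{-1}c$ gives $\{ac, b\}$: the curve $b$ is closed by construction and $ac$ has the same endpoints as $\delta$, so the open/closed character of $\delta$ (and its endpoints, if open) transfers to $ac$ alone, preserving the invariant. For a two-curve excision on $\{\gamma_i, \gamma_j\}$, the invariant rules out the case in which both are open, so at least one is closed. Exploiting the fact that the relation ``the pair contains a retraced arc $r$'' is symmetric in $r$ and $r^{-1}$, I may relabel so that the closed curve plays the role of $\gamma_2$ in the excerpt's definition; the excision then produces the \emph{single merged curve} $adcb$, which is closed when both original curves were closed and which inherits the endpoints $\alpha(\gamma),\omega(\gamma)$ when its partner was the unique open curve of $G_j$. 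Either way the invariant passes to $G_{j+1}$.

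Applied to the maximally excised family $G$, the invariant gives at most one open curve in $G$, and this curve (when it exists) has endpoints $\alpha(\gamma),\omega(\gamma)$. The previous lemma supplies $\partial G = \partial \gamma$: when $\gamma$ is closed, $\partial G = 0$, forcing every curve of $G$ to be closed; when $\gamma$ is open, $\partial G = \delta_{\omega(\gamma)} - \delta_{\alpha(\gamma)} \neq 0$, forcing at least, hence exactly, one open curve in $G$. Labeling this distinguished open curve (or any curve at all, if $\gamma$ is closed) as $\gamma_1$ and the remaining closed curves as $\gamma_2,\dots,\gamma_n$ yields the desired family.

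The main obstacle, as I see it, lies precisely in the two-curve excision that pairs the unique open curve with a closed one: the definition in the excerpt produces either a merged single curve or two separated pieces, depending on which of $\gamma_1,\gamma_2$ is taken to carry $r$ rather than $r^{-1}$, and the second possibility would immediately create a second open curve and break the invariant. The crux of the argument is the observation that ``containing a retraced arc'' is symmetric under $r \leftrightarrow r^{-1}$, so one always has the freedom to place the closed curve in the position of $\gamma_2$ and thereby trigger the merging case; this choice is what keeps the open-curve count at most one and preserves the correct endpoints.
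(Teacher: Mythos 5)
Your argument is essentially the one the paper intends: the paper offers no written proof beyond the remark that the lemma follows ``by induction on the number of excisions and by a direct application of the definition of excision,'' and your inductive invariant (at most one open curve, carrying the endpoints $\alpha(\gamma),\omega(\gamma)$) together with the case analysis is a correct and complete working-out of that induction. Your identification of the crux --- that in a two-curve excision the roles of $r$ and $r^{-1}$ can be swapped so that the closed partner always occupies the $\gamma_2$ slot, forcing the merging case and keeping the open-curve count at one --- is exactly the point that makes the definition of excision cohere with the statement, and the appeal to $\partial G=\partial\gamma$ from the preceding lemma correctly settles which of the two alternatives occurs.

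The one step that does not hold up as written is termination. A simple excision need not reduce the number of flow and geodesic segments: if the retraced arc $r$ is a proper sub-arc of a segment $s_k$ of $\delta$ and $r^{-1}$ a proper sub-arc of a segment $s_l$, then the excision $\delta=arbr^{-1}c\mapsto\{ac,b\}$ cuts $s_k$ and $s_l$ each into two partial segments distributed among $ac$ and $b$, and the total segment count can go from $N$ to $N+2$. So the combinatorial measure you propose is not monotone. (The total length does drop by $2\,\mathrm{length}(r)>0$ at each step, but that alone does not bound the number of steps.) The paper silently assumes that some maximally excised family exists, so this gap is inherited from the source rather than introduced by you; to close it one would have to choose the excisions more carefully --- for instance always excising a retraced arc that is maximal --- or exploit the finite combinatorial structure of the overlap set guaranteed by Lemma~\ref{lem:regular}. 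Apart from this point, your proof is correct and follows the paper's route.
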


The choice of $\Gamma$ as a set of curves obtained as concatenations
of geodesic segments and flow segments yields a simple proof the
following lemma which holds true in greater generality~(cf.\ \cite{MR1473623}).

\begin{lemma}
  \label{lem:regul} Let $G=\{\gamma_1, \gamma_2,\ldots,
\gamma_{n}\}$ be a finite subset of $\Gamma$. Set  $Y=\bigcup_{i=1}^n
\gamma_i(I)$. Recall that a point $y \in Y$ is \emph{regular} if
it  satisfies the following two conditions:
\begin{enumerate}
\item every $t\in \bigcup_{i=1}^n \gamma^{-1}_i\{y\}$ is a
regular point and
\item there exists an open
neighborhood~$W$ of~$y$ such that $Y \cap W$ is a embedded
arc.
\end{enumerate}
The set of regular points
$y \in Y$ is an open and dense subset of $Y$.
\end{lemma}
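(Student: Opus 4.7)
My plan is to partition the set of irregular points of $Y$ into finitely many manageable pieces and then observe that removing a finite set from a finite union of one-dimensional immersed continua leaves an open dense subset.

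First I would decompose each curve $\gamma_i$ into its constituent smooth pieces --- unoriented flow segments and geodesic segments transverse to $X$ --- which concatenate at finitely many points per curve. These concatenation points, together with the finitely many endpoints $\alpha(\gamma_i)$ and $\omega(\gamma_i)$, form a finite set $C_1 \subset Y$. For any $y \in Y \setminus C_1$ every preimage $t \in \gamma_i^{-1}\{y\}$ lies in the interior of some smooth piece and is automatically a regular parameter of that immersion, so condition~(1) in the definition of regularity holds.

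Next I would analyze condition~(2) by comparing pairs $\sigma, \sigma'$ of smooth pieces (allowing $\sigma=\sigma'$ to cover self-intersections). Using the uniqueness of $X$-orbits and of geodesics with prescribed unoriented initial tangent, together with the hypothesis that the geodesic pieces are transverse to~$X$, I obtain the following rigid dichotomy. Either $\sigma$ and $\sigma'$ lie on a common maximal geodesic or on a common $X$-orbit, in which case their union is the (possibly multiply covered) trace of an embedded arc or circle and no interior point is irregular; or else they lie on distinct geodesics, on distinct orbits, or are of mixed type, and at every common point their tangent lines must differ (use uniqueness to rule out coincident tangents in the first two subcases and the transversality hypothesis in the mixed case). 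A standard compactness argument then shows that in the second alternative the two compact images meet at only finitely many points: an accumulation of intersections would force the chord directions, and hence by $C^1$ regularity the two tangent lines, to coincide at the limit point, contradicting the dichotomy.

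Adjoining to $C_1$ these finitely many transverse crossings yields a finite set $N \subset Y$ such that $Y \setminus N$ is exactly the regular locus. Since each $\gamma_i(I)$ is a one-dimensional continuum, $Y \setminus N$ is open and dense in $Y$, proving the lemma. The step I expect to require the most care is the non-accumulation of intersections between two pieces with everywhere-distinct tangent lines on a compact parameter domain; this is a routine but essential application of the ODE uniqueness theorem, invoked either through the geodesic equation or through the flow of $X$, and it is exactly at this point that the special choice of $\Gamma$ as concatenations of flow and (transverse) geodesic segments enters.
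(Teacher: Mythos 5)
First, note that the paper does not actually prove this lemma: it only remarks that the special choice of $\Gamma$ ``yields a simple proof'' and refers to the literature for the general piecewise-$C^1$ case. Your argument is therefore supplying the omitted details, and your overall strategy is clearly the intended one: use uniqueness of geodesics and of $X$-orbits, together with the transversality of the geodesic pieces to $X$, to show that the set of irregular points is finite, and then observe that the complement of a finite set in $Y$ is open and dense (since $Y$, being a finite union of immersed arcs, has no isolated points, and a set with finite complement is automatically open). The compactness argument showing that two compact $C^1$ arcs whose tangent lines disagree at every common point meet in only finitely many points is also correct.

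There is, however, one step that fails as written: the claim that if $\sigma$ and $\sigma'$ lie on a common maximal geodesic then ``their union is the (possibly multiply covered) trace of an embedded arc or circle and no interior point is irregular.'' Unlike $X$-orbits, a maximal geodesic need not have embedded image --- think of a figure-eight closed geodesic on a hyperbolic surface. A single geodesic piece covering such a geodesic has a transverse self-crossing, which \emph{is} an irregular point of $Y$ (condition (2) fails there), yet your dichotomy routes this configuration into the harmless case and never counts that crossing in $N$. The repair is to state the dichotomy pointwise rather than for pairs of pieces: at each point $y$ lying on two branches (of the same piece at two parameters, or of two different pieces), either the two tangent lines at $y$ coincide --- in which case ODE uniqueness for geodesics or for the flow (the mixed case being excluded by transversality) forces the two branches to coincide on a neighbourhood of the respective parameters, so that near $y$ they contribute a single embedded arc --- or the tangent lines differ, and then $y$ is isolated among such points, so your chord-direction compactness argument bounds their total number over the finitely many pairs of pieces. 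With this reformulation every point outside the finite set $C_1\cup\{\text{transverse crossings}\}$ satisfies both conditions of regularity, and the conclusion follows exactly as you state it.
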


\begin{proof}[Proof of Proposition~\ref{prop:inj}]

Suppose $L_x({\widetilde \gamma}_1)=L_x({\widetilde \gamma}_2)$ with
$\gamma_1,\gamma_2\in \Gamma_x$.

First, suppose that the points $\omega(\gamma_1)$ and
$\omega(\gamma_2)$ are distinct, i.e. $\partial {\widetilde\gamma}_1\neq
\partial {\widetilde\gamma}_2$.

Let $\gamma$ be the curve $\gamma=\gamma_1^{-1}\gamma_2$;
$\gamma$ is an open curve in $\Gamma$ with $\partial
{\widetilde \gamma}=
  \partial {\widetilde\gamma_2} - \partial {\widetilde\gamma_1}=
\delta_{\omega(\gamma_2)}- \delta_{\omega(\gamma_1)}\neq0$. The
hypothesis  $L_x({\widetilde \gamma_1})=L_x({\widetilde
  \gamma_2})$ yields   $L({\widetilde \gamma})=0$, that is
  \begin{equation}
    \label{eq:cohom2} {\widetilde\gamma}(\eta)
=h_{\eta}(\omega(\gamma))- h_{\eta}(\alpha(\gamma)), \qquad
\forall \eta\in \Omega^1(M).
  \end{equation}

  By considering a maximal excision of the set $\{\gamma\}$, we
  obtain a set of curves
$G=\{\gamma_1 ,\gamma_2, \ldots, \gamma_n\}\subset \Gamma$ satisfying
\begin{enumerate}
\item the set $G$ is
broadly equivalent to $\{\gamma\}$;
\item  $\partial\gamma_1=
\partial\gamma$, $\partial\gamma_2=\cdots=\partial\gamma_n=0$;
\item every $\gamma_i\in G$ admits no further excisions, that is
  no sub-arc of the collection $G$ is retraced.
\end{enumerate}

The first condition
means
\begin{equation} \widetilde \gamma =\sum_i \widetilde \gamma_i.
  \end{equation}

  We have two cases.

  In the first case there exists $t_0\in I$ such that
  $\gamma_i(t_0)$ is regular and the velocity $\dot \gamma(t_0)$
  is not collinear to $X(\gamma_i(t_0))$. By the previous Lemma,
  there exists an open neighborhood $W$ of $\gamma_i(t_0)$, such
  that $W\cap \cup_j \gamma_j(I)= W\cap \gamma_i(I)$ is an
  embedded arc. Since $\gamma_i(I)$ and $X$ are transverse, there
  exists a one-form $\theta$, supported in $W$, such that
  $\theta(X)$ vanishes identically and such that
  $\int_{\gamma_i}\theta\neq 0$.  Since there are no retraced
  arcs in the set $G$ and by the choice of $W$ and $\theta$ we
  have $\sum_{i}\int_{\gamma_i}\theta \neq 0$. By the
  Remark~\ref{rem:changeform} we have that $h_\theta$ is
  identically constant. But this contradicts \eqref{eq:cohom2}
  since $\int_{\gamma}\theta=\sum_i \int_{\gamma_i}\theta\neq 0$
  and $h_{\theta}(\omega(\gamma))-
  h_{\theta}(\alpha(\gamma))=0$. This case is impossible.

We are left with the case where, at all regular points
$\gamma(t_0)$, the velocity $\dot \gamma(t_0)$ is collinear to
$X(\gamma_i(t_0))$.  Then, since the flow of $X$ does not admit
closed orbits the collection of curves $G$ is reduced to
a singleton $\{\gamma_1\}$. Since $\gamma_1$ does not contain retraced arcs it
is in fact a segment of orbit, and we obtain that the original curve
$\gamma$ is broadly equivalent to a segment of orbit $\gamma_1: t\in
[0,1]\mapsto \phi^{st}(x)$, from $\alpha(\gamma_1)=\alpha(\gamma)$ to
$\omega(\gamma_1)=\omega(\gamma)$. Let $\eta_0$ be a one form such that $\eta_0(X)=1$
so that $\int_{\gamma}\eta_0 =\int_{\gamma_1}\eta_0 =s$.  Again, by
the Remark~\ref{rem:changeform}, we observe that $h_{\eta_0}$ is a
constant function. Hence $h_{\eta}(\omega(\gamma))-
h_{\eta}(\alpha(\gamma))=0$, a contradiction if $s\neq 0 $, i.e.\ if
$\alpha(\gamma)\neq\omega(\gamma) $. Thus the second case is also impossible.

We showed that we must have  $\alpha(\gamma)=\omega(\gamma) $. Then the curve
$\gamma=\gamma_1\gamma_2^{-1}$ is a closed loop (based at
$x$); by hypothesis  $L_x({\widetilde
  \gamma_1})=L({\widetilde\gamma_2})$, hence
$L({\widetilde\gamma})=0$. Finally by Lemma~\ref{def:lhat1} we obtain
$0=L({\widetilde\gamma})={\widetilde \gamma}$. Thus
${\widetilde\gamma_1}={\widetilde\gamma_2}$ and we conclude that $L_x$ on
${{\widetilde \Gamma}}_{x}$ is injective.
\end{proof}

\section{Proof of Theorem~\ref{thm:linear}}

Let $x\in M$ be a point fixed once for all.  Consider the Abelian
group $A=\mathcal C^1(M)/\widetilde \Delta$.  The map $L:\mathcal
C^1(M)\to \mathcal C^1(M)$ of Definition~\ref{def:lhat} defines a
quotient map of $A$ into itself since, by Lemma~\ref{def:lhat1},
$L|\widetilde \Delta$ is the identity mapping of $\widetilde
\Delta$. The restriction of $L$ to $\widetilde{\Gamma}_x$, which
we have denoted by~$L_x$, induces a mapping of
$\widetilde{\Gamma}_x /\widetilde \Delta$ into $A$, which, by
Proposition~\ref{prop:inj}, is in fact a continuous bijection of
$ \widetilde{\Gamma}_x /\widetilde \Delta$ onto $L_x(
\widetilde{\Gamma}_x )/\widetilde \Delta$. Using the
identification $M \approx \widetilde{\Gamma}_x /\widetilde
\Delta$ of Corollary~\ref{coro:homeo3} we then conclude
that~$L_x$ induces a continuous injection $l:M\rightarrow A$,
mapping $M$ onto $L_x( \widetilde{\Gamma}_x )/\widetilde \Delta$.

Fix $y\in M$ and let $\gamma$ be a curve starting at $x$ and
ending at $y$. For $t\in \R$, let $\gamma_t$ be the arc of orbit
$ s \mapsto \phi^{ts} (y)$. Integrating the equation
\eqref{eq:cohomup} along the orbit $\gamma_t$ of $y \in M$ we
have
\begin{equation}
  \label{eq:integr1} \int_{\gamma_t} \eta= c_\eta t +
  h_\eta(\phi^t (y)) - h_\eta(y)
\end{equation}
Let $c\in \mathcal C^1(M)$ be given by $c(\eta)=c_{\eta}$. Then
the equations \eqref{eq:integr1}, in view of the
Definition~\ref{def:lhat}, can be rewritten as the following
equation in~$\mathcal C^1(M)$ for the currents $\widetilde\gamma$
and $\widetilde\gamma_t$ associated to the arcs $\gamma$ and
$\gamma_t$:
\[ L(\widetilde\gamma +
\widetilde\gamma_t)=L(\widetilde\gamma)+ct.
\]
As the endpoints of the arcs $\widetilde\gamma \widetilde\gamma_t
$ and $\widetilde\gamma $ are respectively equal to $\phi_t(y)$
and~$y$, passing to the quotient by $\widetilde \Delta$, we
obtain that $l(\phi^t(y))=l(y)+ ta$, where $t\mapsto ta$ is
the projection to~$A$ of the line subgroup $t\mapsto ct $.

Finally notice that, by restricting the space of currents
$\mathcal C^1(M)$ to the finite dimensional space $H^1(M)$ of
harmonic one-forms on~$M$, we obtain a projection $q:\mathcal
C^1(M)\to H^1(M)^*$.  Taking a further quotient by the
lattice~$P$ of periods of~$M$, we obtain a projection~$q'$ of
$\mathcal C^1(M)$ onto the Albanese torus $H^1(M)^*/P$; the
map~$q'$ factors through a map~$\pi: A \to H^1(M)^*/P $, as~$q$
sends the loop group $\widetilde \Delta$ onto~$P$. Thus we have
\[
\mathcal C^1(M) \to \mathcal C^1(M) /\widetilde \Delta \to
H^1(M)^* /P.
\]
Since the maps $L$ and $q'$ are smooth, the composite map $
q'\circ L$ is also smooth. For any smooth local lift $\phi: U
\subset M \to \widetilde \Gamma_x $ of the projection map $\tilde
\Gamma_x \to M$, we have that $q'\circ L \circ \phi= \pi\circ
l\vert_ U$; we conclude that $\pi'=\pi\circ l$ is a smooth map of
$M$ into $H^1(M)^*/P$.

From the continuity of $\pi'=\pi\circ l$ and the minimality of
the flow it follows that the image of $\pi'$ is a rational
sub-torus of $H^1(M)^*/P$. However the map associating to a
closed loop in $M$ the period along this loop induces a
surjection of $\widetilde \Delta$ onto $P$; it follows that
$\pi'$ is surjective in homology and thus (smooth and) surjective
and indeed equal to F.~and~J.  Rodriguez Hertz' semi-conjugacy.

\begin{remark}
  The proof above shows that in fact there is, to some degree, a
  differential structure on the space $ L_x( \widetilde{\Gamma}_x
  )/\widetilde \Delta$, inherited from the linear structure of
  $\mathcal C^1(M)$; in fact one could show that the map $l$ is a
  morphism of differential (or diffeological) spaces in the sense
  of Chen and Iglesias (\cite{MR0454968}, \cite{Iglesias}). As
  the major problem is to tie the topological structure with the
  differentiable one, we omit  any discussion of this
  point.
\end{remark}

\bibliography{biblio} \bibliographystyle{amsalpha}
\end{document}